\documentclass[12pt]{article}
\usepackage{amsmath,amsfonts,amssymb,amsthm,txfonts,hyperref,url,t1enc}
\newtheorem{corollary}{Corollary}
\newtheorem{lemma}{Lemma}
\newtheorem{theorem}{Theorem}
\newcommand{\N}{\mathbb N}
\newcommand{\Z}{\mathbb Z}
\newcommand{\Q}{\mathbb Q}
\newcommand{\R}{\mathbb R}
\newcommand{\C}{\mathbb C}
\newcommand{\K}{\textbf{\textit{K}}}
\begin{document}
\par
\noindent
\centerline{{\large A hypothetical upper bound for the solutions of a Diophantine}}
\vskip 0.3truecm
\noindent
\centerline{{\large equation with a finite number of solutions}}
\vskip 0.3truecm
\noindent
\centerline{{\large Apoloniusz Tyszka}}
\vskip 0.3truecm
\noindent
{\bf Abstract.} We conjecture that if a system $S \subseteq \{x_i=1,~x_i+x_j=x_k,~x_i \cdot x_j=x_k:$ $i,j,k \in \{1,\ldots,n\}\}$
has only finitely many solutions in integers \mbox{$x_1,\ldots,x_n$},
then each such solution \mbox{$(x_1,\ldots,x_n)$} satisfies $|x_1|,\ldots,|x_n| \leq 2^{\textstyle 2^{n-1}}$.
By the conjecture, if a Diophantine equation has only finitely many solutions in integers (non-negative integers, rationals),
then their heights are bounded from above by a computable function of the degree and the coefficients of the equation.
The conjecture implies that the set of Diophantine equations which have infinitely many solutions in integers
(non-negative integers) is recursively enumerable. The conjecture formulated for an arbitrary computable bound instead
of $2^{\textstyle 2^{n-1}}$ remains in contradiction to Matiyasevich's conjecture that each recursively enumerable
set \mbox{${\cal M} \subseteq {\N}^n$} has a finite-fold Diophantine representation.
\vskip 0.3truecm
\par
\noindent
{\bf 2010 Mathematics Subject Classification}: 03D20, 11U05.
\vskip 0.3truecm
\begin{sloppypar}
\noindent
{\bf Key words and phrases:} computable upper bound for the heights of \mbox{integer} (rational) solutions of a Diophantine equation,
Davis-Putnam-Robinson-Matiyasevich theorem, Diophantine equation with a finite number of integer \mbox{(rational)} solutions,
single-fold (finite-fold) Diophantine representation, system of Diophantine equations.
\end{sloppypar}
\vskip 0.9truecm
\par
The Davis-Putnam-Robinson-Matiyasevich theorem states that every recursively enumerable
set \mbox{${\cal M} \subseteq {\N}^n$} has a Diophantine representation, that is
\begin{equation}
\tag*{\tt (R)}
(a_1,\ldots,a_n) \in {\cal M} \Longleftrightarrow
\exists x_1, \ldots, x_m \in \N ~~W(a_1,\ldots,a_n,x_1,\ldots,x_m)=0
\end{equation}
\begin{sloppypar}
\noindent
for some polynomial $W$ with integer coefficients, see \cite{Matiyasevich1} and \cite{Kuijer}.
The polynomial~$W$ can be computed, if we know a Turing machine~$M$
such that, for all \mbox{$(a_1,\ldots,a_n) \in {\N}^n$}, $M$ halts on \mbox{$(a_1,\ldots,a_n)$} if and only if
\mbox{$(a_1,\ldots,a_n) \in {\cal M}$}, see \cite{Matiyasevich1} and \cite{Kuijer}. The representation~{\tt (R)}
is said to be \mbox{finite-fold} if for any \mbox{$a_1,\ldots,a_n \in \N$} the equation
\mbox{$W(a_1,\ldots,a_n,x_1,\ldots,x_m)=0$} has at most finitely many solutions \mbox{$(x_1,\ldots,x_m) \in {\N}^m$}.
Yu.~Matiyasevich conjectures that each recursively enumerable set \mbox{${\cal M} \subseteq {\N}^n$}
has a \mbox{finite-fold} Diophantine representation, see \mbox{\cite[pp.~341--342]{Davis2}},
\mbox{\cite[p.~42]{Matiyasevich2}} and \mbox{\cite[p.~79]{Matiyasevich3}}. His conjecture remains
in contradiction to the following Conjecture, see Corollary~\ref{cor2}.
\end{sloppypar}
\vskip 0.2truecm
\par
\noindent
{\bf Conjecture.} {\em For each positive integer $n$, if a system
\[
S \subseteq E_n=\{x_i=1,~x_i+x_j=x_k,~x_i \cdot x_j=x_k:~i,j,k \in \{1,\ldots,n\}\}
\]
has only finitely many solutions in integers \mbox{$x_1,\ldots,x_n$},
then each such solution \mbox{$(x_1,\ldots,x_n)$} satisfies $|x_1|,\ldots,|x_n| \leq 2^{\textstyle 2^{n-1}}$.}
\vskip 0.2truecm
\par
Let $T_n$ denote the set of all integer tuples \mbox{$(a_1,\ldots,a_n)$} for which
there exists a system \mbox{$S \subseteq E_n$} such that \mbox{$(a_1,\ldots,a_n)$} solves $S$
and $S$ has at most finitely many solutions in integers \mbox{$x_1,\ldots,x_n$}.
If \mbox{$(a_1,\ldots,a_n) \in T_n$}, then \mbox{$(a_1,\ldots,a_n)$} solves the system
\begin{displaymath}
\left\{
\begin{array}{rcl}
x_{i} &=& 1~~~~~~({\rm all~} i \in \{1,\ldots,n\} {\rm ~with~} a_i=1) \\
x_i+x_j &=& x_k~~~~~({\rm all~} i,j,k \in \{1,\ldots,n\} {\rm ~with~} a_i+a_j=a_k) \\
x_i \cdot x_j &=& x_k~~~~~({\rm all~} i,j,k \in \{1,\ldots,n\} {\rm ~with~} a_i \cdot a_j=a_k)
\end{array}
\right.
\end{displaymath}
which has only finitely many solutions in integers \mbox{$x_1,\ldots,x_n$}.
\vskip 0.2truecm
\noindent
{\bf Proposition.} {\em The Conjecture is true for \mbox{$n \leq 3$}.}
\begin{proof}
$T_1=\{0,1\}$. $T_2$ consists of the pairs \mbox{$(0,0)$}, \mbox{$(1,1)$}, \mbox{$(-1,1)$},
\mbox{$(0,1)$}, \mbox{$(1,2)$}, \mbox{$(2,4)$} and their permutations. $T_3$ consists of the triples
\vskip 0.2truecm
\centerline{$(0,0,0)$, $(1,1,1)$,}
\vskip 0.2truecm
\centerline{$(-1,-1,1)$, $(0,0,1)$, $(1,1,-1)$, $(1,1,0)$, $(1,1,2)$, $(2,2,1)$, $(2,2,4)$, $(4,4,2)$,}
\vskip 0.2truecm
\centerline{$(1,-2,-1)$, $(1,-1,0)$, $(1,-1,2)$, $(1,0,2)$, $(1,2,3)$, $(1,2,4)$,}
\vskip 0.2truecm
\centerline{$(2,4,-2)$, $(2,4,0)$, $(2,4,6)$, $(2,4,8)$, $(2,4,16)$,}
\vskip 0.2truecm
\centerline{$(-4,-2,2)$, $(-2,-1,2)$, $(3,6,9)$, $(4,8,16)$}
\vskip 0.2truecm
\noindent
and their permutations.
\end{proof}
\par
Let ${\cal C}_n$ denote the Conjecture restricted to the systems $S \subseteq E_n$.
\begin{lemma}\label{lem1}
For each positive integer $n$, if ${\cal C}_n$ is false then ${\cal C}_{n+1}$ is false.
\end{lemma}
\begin{proof}
Since ${\cal C}_n$ is false, there exist integers \mbox{$a_1,\ldots,a_n$} and a system \mbox{$S \subseteq E_n$}
such that \mbox{$(a_1,\ldots,a_n)$} solves $S$, \mbox{$|a_n|>2^{\textstyle 2^{n-1}}$}, and the system $S$
has only finitely many solutions in integers \mbox{$x_1,\ldots,x_n$}.
Then, \mbox{$|a_n^2|>2^{\textstyle 2^n}$} and the integer \mbox{$(n+1)$-tuple} \mbox{$(a_1,\ldots,a_n,a_n^2)$}
solves the system \mbox{$S \cup \{x_n \cdot x_n=x_{n+1}\}$} which has only finitely many solutions
in integers \mbox{$x_1,\ldots,x_n,x_{n+1}$}.
\end{proof}
\begin{sloppypar}
The Conjecture seems to be true for $\N$, $\N \setminus \{0\}$, $\Q$, $\R$ and~$\C$,
cf. \mbox{\cite[p.~528, \mbox{Conjecture 5d}]{Tyszka3}}, \mbox{\cite[p.~177,~\mbox{Conjecture 1.5(d)}]{Tyszka4}}
and \mbox{\cite[p.~180]{Tyszka4}}.
For $n \geq 2$, the bound $2^{\textstyle 2^{n-1}}$ cannot be decreased because the system
\end{sloppypar}
\begin{displaymath}
\left\{
\begin{array}{rcl}
x_1+x_1 &=& x_2 \\
x_1 \cdot x_1 &=& x_2 \\
x_2 \cdot x_2 &=& x_3 \\
x_3 \cdot x_3 &=& x_4 \\
&\ldots& \\
x_{n-1} \cdot x_{n-1} &=&x_n
\end{array}
\right.
\end{displaymath}
\begin{sloppypar}
\noindent
has precisely two integer solutions, namely $(0,\ldots,0)$ and
$\Bigl(2,4,16,256,\ldots,2^{\textstyle 2^{n-2}},2^{\textstyle 2^{n-1}}\Bigr)$.
Nevertheless, for each integer $n \geq 12$ there is a system $S \subseteq E_n$ which has infinitely many
integer solutions and they all belong to \mbox{${\Z}^n \setminus [-2^{\textstyle 2^{n-1}},2^{\textstyle 2^{n-1}}]^n$},
see \cite[p.~4,~Theorem~1]{Tyszka2}, cf. \cite[p.~178,~Theorem~2.4]{Tyszka4}. The next theorem generalizes this.
\end{sloppypar}
\begin{theorem}\label{the1} (\cite[p.~7,~Theorem~2]{Tyszka2}) There is an algorithm that for every computable
function \mbox{$f:\N \to \N$} returns a positive integer $m(f)$, for which a second algorithm accepts
on the input $f$ and any integer \mbox{$n \geq m(f)$}, and returns a system \mbox{$S \subseteq E_n$} such that
$S$ has infinitely many integer solutions and each integer tuple $(x_1,\ldots,x_n)$ that solves $S$ satisfies $x_1=f(n)$.
\end{theorem}
\begin{theorem}\label{the2} (\cite[p.~8,~Corollary]{Tyszka2}) There is an algorithm that for every computable
function \mbox{$f:\N \to \N$} returns a positive integer $m(f)$, for which a second algorithm accepts
on the input $f$ and any integer \mbox{$n \geq m(f)$}, and returns an integer tuple \mbox{$(x_1,\ldots,x_n)$}
for which $x_1=f(n)$ and
\vskip 0.2truecm
\par
\noindent
~~~~~(1)~~for each integers $y_1,\ldots,y_n$ the conjunction
\[
\Bigl(\forall i \in \{1,\ldots,n\}~(x_i=1 \Longrightarrow y_i=1)\Bigr) ~\wedge
\]
\[
\Bigl(\forall i,j,k \in \{1,\ldots,n\}~(x_i+x_j=x_k \Longrightarrow y_i+y_j=y_k)\Bigr) ~\wedge
\]
\[
\forall i,j,k \in \{1,\ldots,n\}~(x_i \cdot x_j=x_k \Longrightarrow y_i \cdot y_j=y_k)
\]
\par
\noindent
~~~~~~~~~~~~implies that $x_1=y_1$.
\end{theorem}
If $n \geq 2$, then the tuple \mbox{$\left(x_1,\ldots,x_n\right)=\left(2^{\textstyle 2^{n-2}},2^{\textstyle 2^{n-3}},\ldots,256,16,4,2,1\right)$}
has property {\em (1)}. Unfortunately, we do not know any explicitly given integers \mbox{$x_1,\ldots,x_n$}
with property {\em (1)} and \mbox{$|x_1|>2^{\textstyle 2^{n-2}}$}.
\vskip 0.2truecm
\par
To each system $S \subseteq E_n$ we assign the system $\widetilde{S}$ defined by
\vskip 0.2truecm
\par
\noindent
\centerline{$\left(S \setminus \{x_i=1:~i \in \{1,\ldots,n\}\}\right) \cup$}
\par
\noindent
\centerline{$\{x_i \cdot x_j=x_j:~i,j \in \{1,\ldots,n\} {\rm ~and~the~equation~} x_i=1 {\rm ~belongs~to~} S\}$}
\vskip 0.2truecm
\par
\noindent
In other words, in order to obtain $\widetilde{S}$ we remove from $S$ each
equation $x_i=1$ and replace it by the following $n$ equations:
\vskip 0.2truecm
\par
\noindent
\centerline{$\begin{array}{rcl}
x_i \cdot x_1 &=& x_1\\
&\ldots& \\
x_i \cdot x_n &=& x_n
\end{array}$}
\begin{lemma}\label{lem2}
For each system $S \subseteq E_n$
\begin{eqnarray*}
\{(x_1,\ldots,x_n) \in {\Z}^n:~(x_1,\ldots,x_n) {\rm ~solves~} \widetilde{S}\} &=& \\
\{(x_1,\ldots,x_n) \in {\Z}^n:~(x_1,\ldots,x_n) {\rm ~solves~} S\} \cup
\{(0,\ldots,0)\}&
\end{eqnarray*}
\end{lemma}
\par
By Lemma~\ref{lem2}, the Conjecture is equivalent to
\[
\forall x_1,\ldots,x_n \in \Z ~\exists y_1,\ldots,y_n \in \Z
\]
\[
\bigl(2^{\textstyle 2^{n-1}}<|x_1| \Longrightarrow (|x_1|<|y_1| \vee \ldots \vee |x_1|<|y_n|)\bigr) ~\wedge
\]
\[
\bigl(\forall i,j,k \in \{1,\ldots,n\}~(x_i+x_j=x_k \Longrightarrow y_i+y_j=y_k)\bigr) ~\wedge
\]
\[
\forall i,j,k \in \{1,\ldots,n\}~(x_i \cdot x_j=x_k \Longrightarrow y_i \cdot y_j=y_k)
\]
\par
The statement
\[
\forall x_1,\ldots,x_n \in \Z ~\exists y_1,\ldots,y_n \in \Z
\]
\[
\bigl(2^{\textstyle 2^{n-1}}<|x_1| \Longrightarrow |x_1|<|y_1|\bigr) ~\wedge
\]
\[
\bigl(\forall i,j,k \in \{1,\ldots,n\}~(x_i+x_j=x_k \Longrightarrow y_i+y_j=y_k)\bigr) ~\wedge
\]
\[
\forall i,j,k \in \{1,\ldots,n\}~(x_i \cdot x_j=x_k \Longrightarrow y_i \cdot y_j=y_k)
\]
\begin{sloppypar}
\noindent
obviously strengthens the Conjecture, but is false for some~$n$. The last observation follows from Theorem~\ref{the2}.
\vskip 0.2truecm
\par
Let \mbox{$D(x_1,\ldots,x_p) \in {\Z}[x_1,\ldots,x_p]$}. For the Diophantine equation \mbox{$2 \cdot D(x_1,\ldots,x_p)=0$},
let $M$ denote the maximum of the absolute values of its coefficients.
Let ${\cal T}$ denote the family of all polynomials
$W(x_1,\ldots,x_p) \in {\Z}[x_1,\ldots,x_p]$ whose all coefficients belong to the interval $[-M,M]$
and ${\rm deg}(W,x_i) \leq d_i={\rm deg}(D,x_i)$ for each $i \in \{1,\ldots,p\}$.
Here we consider the degrees of $W(x_1,\ldots,x_p)$ and $D(x_1,\ldots,x_p)$
with respect to the variable~$x_i$. It is easy to check that
\begin{equation}
\tag*{$(\ast)$}
{\rm card}({\cal T})=(2M+1)^{\textstyle (d_1+1) \cdot \ldots \cdot (d_p+1)}
\end{equation}
\par
We choose any bijection \mbox{$\tau: \{p+1,\ldots,{\rm card}({\cal T})\} \longrightarrow {\cal T} \setminus \{x_1,\ldots,x_p\}$}.
Let ${\cal H}$ denote the family of all equations of the forms
\vskip 0.2truecm
\noindent
\centerline{$x_i=1$, $x_i+x_j=x_k$, $x_i \cdot x_j=x_k$~~($i,j,k \in \{1,\ldots,{\rm card}({\cal T})\})$}
\vskip 0.2truecm
\noindent
which are polynomial identities in \mbox{${\Z}[x_1,\ldots,x_p]$} if
\[
\forall s \in \{p+1,\ldots,{\rm card}({\cal T})\} ~~x_s=\tau(s)
\]
There is a unique \mbox{$q \in \{p+1,\ldots,{\rm card}({\cal T})\}$} such that \mbox{$\tau(q)=2 \cdot D(x_1,\ldots,x_p)$}.
For each ring $\K$ extending $\Z$ the system ${\cal H}$ implies \mbox{$2 \cdot D(x_1,\ldots,x_p)=x_q$}.
To see this, we observe that there exist pairwise distinct
\mbox{$t_0,\ldots,t_m \in {\cal T}$} such that $m>p$ and
\[
t_0=1~ \wedge ~t_1=x_1~ \wedge ~\ldots~ \wedge ~t_p=x_p~ \wedge ~t_m=2 \cdot D(x_1,\ldots,x_p)~ \wedge
\]
\[
\forall i \in \{p+1,\ldots,m\}~ \exists j,k \in \{0,\ldots,i-1\} ~~(t_j+t_k=t_i \vee t_i+t_k=t_j \vee t_j \cdot t_k=t_i)
\]
For each ring $\K$ extending $\Z$ and for each \mbox{$x_1,\ldots,x_p \in \K$}
there exists a unique tuple \mbox{($x_{p+1},\ldots,x_{{\rm card}({\cal T})}) \in \K^{{\rm card}({\cal T})-p}$}
such that the tuple \mbox{$(x_1,\ldots,x_p,x_{p+1},\ldots,x_{{\rm card}({\cal T})})$}
solves the system \mbox{${\cal H}$}. The sought elements \mbox{$x_{p+1},\ldots,x_{{\rm card}({\cal T})}$}
are given by the formula
\[
\forall s \in \{p+1,\ldots,{\rm card}({\cal T})\} ~~x_s=\tau(s)(x_1,\ldots,x_p)
\]
\begin{lemma}\label{lem3}
The system ${\cal H} \cup \{x_q+x_q=x_q\}$ can be simply computed.
For each ring $\K$ extending $\Z$, the equation $D(x_1,\ldots,x_p)=0$
is equivalent to the system ${\cal H} \cup \{x_q+x_q=x_q\} \subseteq E_{{\rm card}({\cal T})}$.
Formally, this equivalence can be written as
\[
\forall x_1,\ldots,x_p \in \K ~\Bigl(D(x_1,\ldots,x_p)=0 \Longleftrightarrow
\exists x_{p+1},\ldots,x_{{\rm card}({\cal T})} \in \K
\]
\[
(x_1,\ldots,x_p,x_{p+1},\ldots,x_{{\rm card}({\cal T})}) {\rm ~solves~the~system~}
{\cal H} \cup \{x_q+x_q=x_q\} \Bigr)
\]
For each ring $\K$ extending $\Z$ and for each \mbox{$x_1,\ldots,x_p \in \K$} with
\mbox{$D(x_1,\ldots,x_p)=0$} there exists a unique tuple
\mbox{($x_{p+1},\ldots,x_{{\rm card}({\cal T})}) \in \K^{{\rm card}({\cal T})-p}$} such
that the tuple \mbox{$(x_1,\ldots,x_p,x_{p+1},\ldots,x_{{\rm card}({\cal T})})$} solves the system
\mbox{${\cal H} \cup \{x_q+x_q=x_q\}$}. Hence, for each ring $\K$ extending $\Z$ the equation
\mbox{$D(x_1,\ldots,x_p)=0$} has the same number of solutions as the system \mbox{${\cal H} \cup \{x_q+x_q=x_q\}$}.
\end{lemma}
\end{sloppypar}
\par
Putting $M=M/2$ we obtain new families ${\cal T}$ and ${\cal H}$.
There is a unique $q \in \{1,\ldots,{\rm card}({\cal T})\}$ such that
\[
\Bigl(q \in \{1,\ldots,p\}~ \wedge ~x_q=D(x_1,\ldots,x_p)\Bigr)~ \vee
\]
\[
\Bigl(q \in \{p+1,\ldots,{\rm card}({\cal T})\}~ \wedge ~\tau(q)=D(x_1,\ldots,x_p)\Bigr)
\]
The new system \mbox{${\cal H} \cup \{x_q+x_q=x_q\}$} is equivalent to \mbox{$D(x_1,\ldots,x_p)=0$}
and can be simply computed.
\vskip 0.2truecm
\par
It is unknown whether $\Z$ is existentially definable in $\Q$.
If it is, then a strong variant of the Bombieri-Lang conjecture is false, see \cite[p.~21,~Theorem~20]{Koenigsmann}.
\begin{theorem}\label{the3} (cf. \cite[p.~180,~Theorem~3.1]{Tyszka4}) Let \mbox{$f: \N \to \N$} be a computable
function. If $\Z$ is definable in $\Q$ by an existential formula, then there is
a positive integer $q$ and a system $S \subseteq E_q$ such that $S$ has infinitely many rational solutions
and they all belong to \mbox{${\Q}^q \setminus [-f(q),f(q)]^q$}.
\end{theorem}
\begin{proof}
If $\Z$ is definable in $\Q$ by an existential formula, then $\Z$ is definable in $\Q$ by a Diophantine formula. By Lemma~\ref{lem3},
\[
\forall t_1 \in \Q ~\Bigl(t_1 \in \Z \Longleftrightarrow \exists t_2,\ldots,t_p \in \Q ~~\Phi(t_1,t_2,\ldots,t_p)\Bigr)
\]
\begin{sloppypar}
\noindent
where $\Phi(t_1,t_2,\ldots,t_p)$ is a conjunction of formulae of the forms \mbox{$t_i=1$}, \mbox{$t_i+t_j=t_k$,}
\mbox{$t_i \cdot t_j=t_k$,} where $i,j,k \in \{1,\ldots,p\}$. The function \mbox{$\N \ni n \to f(n \cdot p)+1 \in \N$}
is computable. By Theorem~\ref{the1}, there is a positive integer $m$ and a system \mbox{$S \subseteq E_m$}
such that $S$ has infinitely many integer solutions and they all belong to \mbox{${\Z}^m \setminus [-f(m \cdot p),f(m \cdot p)]^m$}.
The following system
\end{sloppypar}
\[\left\{
\begin{array}{l}
{\rm all~equations~occurring~in~} S\\
{\rm all~equations~occurring~in~} \Phi(x_1,x_{1,2},\ldots,x_{1,p})\\
{\rm all~equations~occurring~in~} \Phi(x_2,x_{2,2},\ldots,x_{2,p})\\
\ldots\\
{\rm all~equations~occurring~in~} \Phi(x_{m-1},x_{m-1,2},\ldots,x_{m-1,p})\\
{\rm all~equations~occurring~in~} \Phi(x_m,x_{m,2},\ldots,x_{m,p})
\end{array}
\right.\]
with $m \cdot p$ variables has infinitely many rational solutions and they
all belong to ${\Q}^{m \cdot p} \setminus [-f(m \cdot p),f(m \cdot p)]^{m \cdot p}$.
\end{proof}
\newpage
For many Diophantine equations we know that the number of integer (rational) solutions is
finite, let us recall Siegel's theorem on integral points on curves and Faltings' theorem.
Faltings' theorem tell us that certain curves
have finitely many rational points, but no known proof gives any bound on the sizes of the
numerators and denominators of the coordinates of those points, see \cite[p.~722]{Gowers}.
In all such cases the Conjecture will allow us to compute such a bound.
\begin{theorem}\label{the4}
\begin{sloppypar}
Assuming the Conjecture, if a Diophantine equation \mbox{$D(x_1,\ldots,x_p)=0$}
has only finitely many integer solutions, then each such solution $(x_1,\ldots,x_p)$ satisfies
\end{sloppypar}
\[
|x_1|,\ldots,|x_p| \leq {\rm bound}(D)=2^{\textstyle 2^{\textstyle (2M+1)^{\textstyle (d_1+1) \cdot \ldots \cdot (d_p+1)}-1}}
\]
Here, $M$ stands for the maximum of the absolute values of the coefficients of $D(x_1,\ldots,x_p)$,
$d_i$ denote the degree of $D(x_1,\ldots,x_p)$ with respect to the variable~$x_i$.
\end{theorem}
\begin{proof}
It follows from $(\ast)$ and Lemma~\ref{lem3}.
\end{proof}
\begin{corollary}\label{cor1}
Assuming the Conjecture, for each polynomial $D(x_1,\ldots,x_p)$ with integer coefficients
\[
{\rm card} \left(\left\{(x_1,\ldots,x_p) \in {\Z}^p:~D(x_1,\ldots,x_p)=0\right\}\right) \in 
\]
\[
\left\{0,1,2,\ldots,\left(1+2 \cdot {\rm bound}(D)\right)^p \right\} \cup \left\{\omega\right\}
\]
\end{corollary}
\vskip 0.2truecm
\par
Unfortunately, it is undecidable whether a Diophantine equation has infinitely or finitely many solutions
in positive integers, see \cite{Davis1}. The same is true when we consider integer solutions or non-negative
integer solutions. Moreover, the set of Diophantine equations which have at most finitely many solutions in
non-negative integers is not recursively enumerable, see \mbox{\cite[p.~104,~Corollary~1]{Smorynski1}} and
\mbox{\cite[p.~240]{Smorynski2}}.
\vskip 0.2truecm
\par
For a polynomial $D(x,y)$ with integer coefficients, the following set
\[
\{x \in \N: \exists y \in \Z ~(D(x,y)=0 \vee D(-x,y)=0)\}~\cup
\]
\[
\{y \in \N: \exists x \in \Z ~(D(x,y)=0 \vee D(x,-y)=0)\}
\]
consists of non-negative integers. Let ${\bf Big}(D)$ denote its supremum in $\N$. Of course, \mbox{${\bf Big}: \Z[x,y] \to \N \cup \{\infty\}$}.
Let us consider the following three statements:
\vskip 0.2truecm
\noindent
~~~(1)~~~For each polynomial $D(x,y)$ with integer coefficients, it is decidable whether
or not the equation $D(x,y)=0$ has only finitely many solutions in \mbox{integers~$x,y$}.
\vskip 0.2truecm
\noindent
~~~(2)~~~The Conjecture.
\vskip 0.2truecm
\noindent
~~~(3)~~~The function {\bf Big} is not computable.
\vskip 0.2truecm
\noindent
Statement~(3) expresses the conjecture of J.~M.~Rojas, see \cite{Rojas2}, cf. \cite{Rojas1}. The negation of statement~(3)
implies statement~(1). By Theorem~\ref{the4}, \mbox{statements (1)--(3)} are jointly inconsistent.
\vskip 0.2truecm
\par
Assuming the Conjecture, if a Diophantine equation has only finitely many integer solutions,
then these solutions can be algorithmically found by applying Theorem~\ref{the4}.
Of course, only theoretically, because for interesting Diophantine
equations the bound $2^{\textstyle 2^{n-1}}$ is too high for the method of exhaustive search.
Usually, but not always. The equation $x_1^5-x_1=x_2^2-x_2$ has only finitely many rational solutions (\cite{Mignotte}),
and we know all integer solutions,
$(-1,0)$, $(-1,1)$, $(0,0)$, $(0,1)$, $(1,0)$, $(1,1)$, $(2,-5)$, $(2,6)$, $(3,-15)$, $(3,16)$, $(30,-4929)$,
$(30,4930)$, see~\cite{Bugeaud}. \mbox{Always} $x_2^2-x_2 \geq -\frac{1}{4}$, so $x_1>-2$.
The system
\begin{displaymath}
\left\{
\begin{array}{rcl}
x_1 \cdot x_1 &=& x_3 \\
x_3 \cdot x_3 &=& x_4 \\
x_1 \cdot x_4 &=& x_5 \\
x_1+x_6 &=& x_5 \\
x_2 \cdot x_2 &=& x_7 \\
x_2+x_6 &=& x_7
\end{array}
\right.
\end{displaymath}
\begin{sloppypar}
\noindent
is equivalent to \mbox{$x_1^5-x_1=x_2^2-x_2$}. By the Conjecture, \mbox{$|x_1^5|=|x_5| \leq 2^{\textstyle 2^{7-1}}=2^{64}$}.
Therefore, \mbox{$-2<x_1 \leq 2^{\textstyle \frac{64}{5}}<7132$}, so the equivalent equation \mbox{$4x_1^5-4x_1+1=(2x_2-1)^2$}
can be solved by a computer.
\end{sloppypar}
\vskip 0.2truecm
\par
The algorithm presented in the proof of Lemma~\ref{lem3} is not useful for practical computations, because it introduces
a large number of auxiliary variables. Therefore, for the equation \mbox{$x_1^5-x_1=x_2^2-x_2$} we have chosen the equivalent
system which has only $7$ variables. In \cite[pp.~92--93]{Cipu}, M.~Cipu studies the system
\begin{displaymath}
\left\{
\begin{array}{rcl}
x^2 - 3z^2 &=& 1 \\
y^2 - 783z^2 &=& 1
\end{array}
\right.
\end{displaymath}
for which he constructs various equivalent systems which contain only equations of the forms \mbox{$x_i=1$}, \mbox{$x_i+x_j=x_k$},
\mbox{$x_i \cdot x_j=x_k$}.
\vskip 0.2truecm
\par
Assuming the Conjecture, also the heights of rational solutions can be computably bounded from above,
as we will show in Theorem~\ref{the5}.
\begin{lemma}\label{lem4}~(\cite[p.~14,~the proof~of~Theorem~1.11]{Narkiewicz})~ The integers $A$ and $B>0$ are relatively prime if
and only if there exist integers $X$ and $Y$ such that $A \cdot X + B \cdot Y=1$ and $|X| \leq B$.
\end{lemma}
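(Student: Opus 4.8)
The plan is to establish the two implications separately, and to observe at the outset that the bound $|X|\leq B$ is needed only for one of them.

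First I would dispatch the easy direction: suppose integers $X,Y$ satisfy $A\cdot X+B\cdot Y=1$ (the inequality $|X|\leq B$ is irrelevant here). Then any common divisor $d$ of $A$ and $B$ divides $A\cdot X+B\cdot Y=1$, hence $d=\pm 1$, so $A$ and $B$ are relatively prime. For the converse, assume $\gcd(A,B)=1$. By Bézout's identity — equivalently, by running the extended Euclidean algorithm on $A$ and $B$ — there exist integers $X_0,Y_0$ with $A\cdot X_0+B\cdot Y_0=1$. Since $B>0$, for every $t\in\Z$ the pair $(X_0+B\,t,\;Y_0-A\,t)$ is again a solution of $A\cdot X+B\cdot Y=1$, and as $t$ ranges over $\Z$ the first coordinate $X_0+B\,t$ runs through the whole congruence class of $X_0$ modulo $B$. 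Because $\{0,1,\ldots,B-1\}$ is a complete residue system modulo $B$, I can choose the unique $t$ for which $X:=X_0+B\,t$ satisfies $0\leq X\leq B-1$; the corresponding $Y:=Y_0-A\,t$ is still an integer, and $A\cdot X+B\cdot Y=1$ with $|X|=X\leq B-1<B$, which is even stronger than the claimed bound.

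The argument is entirely elementary, so I do not expect a genuine obstacle; the only points requiring a little care are that reducing $X$ modulo $B$ keeps $Y$ integral — which it does, since we subtract an integer multiple of $A$ — and that the reduced $X$ automatically meets the stated inequality (in fact with room to spare, $|X|\leq B-1$, so the lemma's bound $|X|\leq B$ is comfortably satisfied and one could even work with a centered residue system if a symmetric bound were preferred).
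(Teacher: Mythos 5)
Your proof is correct. The paper does not prove this lemma itself but only cites Narkiewicz (the proof of Theorem~1.11 there), and your argument --- the trivial direction via a common divisor dividing $1$, and the converse via B\'ezout followed by translating $X_0$ by a multiple of $B$ into the residue range $\{0,\ldots,B-1\}$ --- is exactly the standard argument one would find in that reference; note that your reduction in fact yields the slightly stronger bound $|X| \leq B-1$.
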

\begin{theorem}\label{the5}
\begin{sloppypar}
Assuming the Conjecture, if a Diophantine equation \mbox{$D(x_1,\ldots,x_p)=0$}
has only finitely many rational solutions, then their heights are bounded from above by a computable function of $D$.
\end{sloppypar}
\end{theorem}
\par
\begin{proof}
By applying Lemma~\ref{lem3}, we can write the equation as an equivalent
system $S \subseteq E_n$, where $n$ and $S$ can be computed.
We substitute \mbox{$x_m=\frac{\textstyle y_m}{\textstyle z_m}$} for \mbox{$m \in \{1,\ldots,n\}$}.
Each equation $x_i=1 \in S$ we replace by the equation $y_i=z_i$.
Each equation $x_i+x_j=x_k \in S$ we replace by the equation
$y_i \cdot z_j \cdot z_k+y_j \cdot z_i \cdot z_k = y_k \cdot z_i \cdot z_j$.
Each equation $x_i \cdot x_j=x_k \in S$ we replace by the equation
$(y_i \cdot z_j \cdot z_k) \cdot (y_j \cdot z_i \cdot z_k) = y_k \cdot z_i \cdot z_j$.
Next, we incorporate to $S$ all equations
\begin{displaymath}
\begin{array}{rcl}
1+s_m^2+t_m^2+u_m^2+v_m^2 &=& z_m \\
p_m \cdot y_m + q_m \cdot z_m &=& 1 \\
p_m^2+a_m^2+b_m^2+c_m^2+d_m^2 &=& z_m^2
\end{array}
\end{displaymath}
with $m \in \{1,\ldots,n\}$. By Lagrange's four-square theorem and Lemma~\ref{lem4},
the enlarged system has at most finitely many integer solutions and is equivalent to the original one. Next,
we construct a single Diophantine equation equivalent to the enlarged \mbox{system $S$}.
For this equation we apply Theorem~\ref{the4}
\end{proof}
\begin{theorem}\label{the6}
\begin{sloppypar}
Assuming the Conjecture, if a Diophantine equation \mbox{$D(x_1,\ldots,x_p)=0$}
has only finitely many solutions in non-negative integers, then the conjectural bound for these solutions
can be computed by applying Theorem~\ref{the4} to the equation
\end{sloppypar}
\[
\widehat{D}(x_1,a_1,b_1,c_1,d_1,\ldots,x_p,a_p,b_p,c_p,d_p)=
\]
\[
D(x_1,\ldots,x_p)^2+(x_1-a_1^2-b_1^2-c_1^2-d_1^2)^2+\ldots+(x_p-a_p^2-b_p^2-c_p^2-d_p^2)^2=0
\]
\end{theorem}
\begin{proof}
By Lagrange's four-square theorem
\[
\{(x_1,\ldots,x_p) \in {\Z}^p:~\exists a_1,b_1,c_1,d_1,\ldots,a_p,b_p,c_p,d_p \in \Z
\]
\[
\widehat{D}(x_1,a_1,b_1,c_1,d_1,\ldots,x_p,a_p,b_p,c_p,d_p)=0\}=
\]
\[
\{(x_1,\ldots,x_p) \in {\N}^p:~D(x_1,\ldots,x_p)=0\}
\]
Since the equation \mbox{$D(x_1,\ldots,x_p)=0$} has only finitely many solutions in non-negative integers,
the equation
\[
\widehat{D}(x_1,a_1,b_1,c_1,d_1,\ldots,x_p,a_p,b_p,c_p,d_p)=0
\]
has only finitely many solutions in integers $x_1,a_1,b_1,c_1,d_1,\ldots,x_p,a_p,b_p,c_p,d_p$.
\end{proof}
\begin{sloppypar}
M.~Davis, Yu.~Matiyasevich and J.~Robinson conjecture that there is no \mbox{algorithm} for listing the Diophantine
equations with infinitely many solutions, see \mbox{\cite[p.~372]{Davis2}}.
\end{sloppypar}
\begin{theorem}\label{the7}
The Conjecture implies that the set of Diophantine equations which have infinitely many solutions
in integers (non-negative integers) is recursively enumerable.
\end{theorem}
\begin{proof}
The following algorithm works for all polynomials \mbox{$D(x_1,\ldots,x_p) \in {\Z}[x_1,\ldots,x_p]$}.
\par
\noindent
\centerline{$\alpha:={\rm bound}(D)+1$}
\par
\noindent
\centerline{WHILE}
\par
\noindent
\centerline{$D(y_1,\ldots,y_p) \neq 0$ for all integers $y_1,\ldots,y_p$ with ${\rm max}(|y_1|,\ldots,|y_p|)=\alpha$}
\par
\noindent
\centerline{DO}
\par
\noindent
\centerline{$\alpha:=\alpha+1$}
\begin{sloppypar}
\noindent
Assuming the Conjecture and applying Theorem~\ref{the4}, we conclude that the algorithm terminates
if and only if the equation \mbox{$D(x_1,\ldots,x_p)=0$} has infinitely many solutions in integers.
For solutions in non-negative integers, we consider the following algorithm:
\end{sloppypar}
\par
\noindent
\centerline{$\theta:={\rm bound}(\widehat{D})+1$}
\par
\noindent
\centerline{WHILE}
\par
\noindent
\centerline{$D(y_1,\ldots,y_p) \neq 0$ for all non-negative integers $y_1,\ldots,y_p$ with ${\rm max}(y_1,\ldots,y_p)=\theta$}
\par
\noindent
\centerline{DO}
\par
\noindent
\centerline{$\theta:=\theta+1$}
\par
\noindent
Assuming the Conjecture and applying Theorem~\ref{the6}, we conclude that the algorithm terminates
if and only if the equation \mbox{$D(x_1,\ldots,x_p)=0$} has infinitely many solutions in non-negative integers.
\end{proof}
\begin{theorem}\label{the8}
If Matiyasevich's conjecture is true, then for every computable function
\mbox{$f:\N \setminus \{0\} \to \N$} there is a positive integer $m(f)$ such that
for each integer \mbox{$n \geq m(f)$} there exists a system \mbox{$S \subseteq E_n$} which has
only finitely many solutions in integers \mbox{$x_1,\ldots,x_n$} and
each integer tuple \mbox{$(x_1,\ldots,x_n)$} that solves~$S$ satisfies \mbox{$x_1=f(n)+1$}.
\end{theorem}
\begin{proof}
By Matiyasevich's conjecture, the function $\N \setminus \{0\} \ni n \to f(n)+1 \in \N$ has a finite-fold Diophantine
representation. It means that there is a polynomial $W(x_1,x_2,x_3,\ldots,x_r)$ with integer coefficients such that
for each non-negative integers $x_1$, $x_2$,
\begin{equation}
\tag*{\tt (E1)}
\Bigl(x_2 \geq 1 \wedge x_1=f(x_2)+1\Bigr) \Longleftrightarrow \exists x_3, \ldots, x_r \in \N ~~W(x_1,x_2,x_3,\ldots,x_r)=0
\end{equation}
and
\[
{\rm only~finitely~many~tuples~}(x_3,\ldots,x_r) \in {\N}^{r-2} {\rm ~satisfy~} W(x_1,x_2,x_3,\ldots,x_r)=0~~~~{\tt (A)}.
\]
By the equivalence~{\rm (E1)} and Lagrange's four-square theorem, for each \mbox{integers $x_1$, $x_2$},
the conjunction \mbox{$(x_2 \geq 1) \wedge (x_1=f(x_2)+1)$} holds true if and only if there exist integers
\[
a,b,c,d,\alpha,\beta,\gamma,\delta,x_3,x_{3,1},x_{3,2},x_{3,3},x_{3,4},\ldots,x_r,x_{r,1},x_{r,2},x_{r,3},x_{r,4}
\]
such that
\[
W^2(x_1,x_2,x_3,\ldots,x_r)+\bigl(x_1-a^2-b^2-c^2-d^2\bigr)^2+\bigl(x_2-\alpha^2-\beta^2-\gamma^2-\delta^2\bigr)^2+
\]
\[
\bigl(x_3-x^2_{3,1}-x^2_{3,2}-x^2_{3,3}-x^2_{3,4}\bigr)^2+\ldots+\bigl(x_r-x^2_{r,1}-x^2_{r,2}-x^2_{r,3}-x^2_{r,4}\bigr)^2=0
\]
The sentence~{\tt (A)} guarantees that for each integers $x_1$, $x_2$, only finitely many integer tuples
\[
\bigl(a,b,c,d,\alpha,\beta,\gamma,\delta,x_3,x_{3,1},x_{3,2},x_{3,3},x_{3,4},\ldots,x_r,x_{r,1},x_{r,2},x_{r,3},x_{r,4}\bigr)
\]
satisfy the last equality. By Lemma~\ref{lem3}, there is an integer \mbox{$s \geq 3$} such that for each integers $x_1$, $x_2$,
\begin{equation}
\tag*{\tt (E2)}
\Bigl(x_2 \geq 1 \wedge x_1=f(x_2)+1\Bigr) \Longleftrightarrow \exists x_3,\ldots,x_s \in \Z ~~\Psi(x_1,x_2,x_3,\ldots,x_s)
\end{equation}
where the formula $\Psi(x_1,x_2,x_3,\ldots,x_s)$ is algorithmically determined as a conjunction of formulae of the forms
\mbox{$x_i=1$}, \mbox{$x_i+x_j=x_k$}, \mbox{$x_i \cdot x_j=x_k$} \mbox{($i,j,k \in \{1,\ldots,s\}$)} and
for each integers \mbox{$x_1,x_2$} at most finitely many integer tuples \mbox{$(x_3,\ldots,x_s)$} satisfy
\mbox{$\Psi(x_1,x_2,x_3,\ldots,x_s)$}.
Let $m(f)=4+2s$, and let $[\cdot]$ denote the integer part function. For each integer $n \geq m(f)$,
\[
n-\left[\frac{n}{2}\right]-2-s \geq m(f)-\left[\frac{m(f)}{2}\right]-2-s \geq m(f)-\frac{m(f)}{2}-2-s=0
\]
Let $S$ denote the following system
\[\left\{
\begin{array}{rcl}
{\rm all~equations~occurring~in~}\Psi(x_1,x_2,x_3,\ldots,x_s) \\
n-\left[\frac{n}{2}\right]-2-s {\rm ~equations~of~the~form~} z_i=1 \\
t_1 &=& 1 \\
t_1+t_1 &=& t_2 \\
t_2+t_1 &=& t_3 \\
&\ldots& \\
t_{\left[\frac{n}{2}\right]-1}+t_1 &=& t_{\left[\frac{n}{2}\right]} \\
t_{\left[\frac{n}{2}\right]}+t_{\left[\frac{n}{2}\right]} &=& w \\
w+y &=& x_2 \\
y+y &=& y {\rm ~(if~}n{\rm ~is~even)} \\
y &=& 1 {\rm ~(if~}n{\rm ~is~odd)}
\end{array}
\right.\]
with $n$ variables. The system $S$ has only finitely many integer solutions,
By the equivalence~{\tt (E2)}, $S$ is consistent over $\Z$.
If an integer $n$-tuple $(x_1,x_2,x_3,\ldots,x_s,\ldots,w,y)$ solves~$S$,
then by the equivalence~{\tt (E2)},
\[
x_1=f(x_2)+1=f(w+y)+1=f\left(2 \cdot \left[\frac{n}{2}\right]+y\right)+1=f(n)+1
\]
\end{proof}
\begin{sloppypar}
\begin{corollary}\label{cor2}
The Conjecture formulated for an arbitrary computable bound
\mbox{$f: \N \setminus \{0\} \to \N$} instead of the bound \mbox{$\N \setminus \{0\} \ni n \to 2^{\textstyle 2^{n-1}} \in \N$}
remains in contradiction to Matiyasevich's conjecture on finite-fold Diophantine representations.
\end{corollary}
\end{sloppypar}
\par
Logicians believe in Matiyasevich's conjecture, but some heuristic argument suggests the opposite possibility.
Below is the excerpt from page 135 of the \mbox{book \cite{Smart}:}
\vskip 0.2truecm
\par
\noindent
{\em Folklore. If a Diophantine equation has only finitely many solutions then those solutions
are small in `height' when compared to the parameters of the equation.
This folklore is, however, only widely believed because of the large amount of experimental
evidence which now exists to support it.}
\vskip 0.2truecm
\par
Below is the excerpt from page 12 of the article \cite{Stoll}:
\par
\vskip 0.2truecm
\noindent
{\em Note that if a Diophantine equation is solvable, then we can prove it, since we will
eventually find a solution by searching through the countably many possibilities
(but we do not know beforehand how far we have to search). So the really hard
problem is to prove that there are no solutions when this is the case. A similar
problem arises when there are finitely many solutions and we want to find them
all. In this situation one expects the solutions to be fairly small. So usually it
is not so hard to find all solutions; what is difficult is to show that there are no
others.}
\vskip 0.2truecm
\par
That is, mathematicians are intuitively persuaded that solutions are small when there are finitely
many of them. It seems that there is a reason which is common to all the equations. Such a
reason might be the Conjecture whose consequences we have already presented.

\par
\noindent
Apoloniusz Tyszka\\
Technical Faculty\\
Hugo Ko\l{}\l{}\k{a}taj University\\
Balicka 116B, 30-149 Krak\'ow, Poland\\
E-mail address: \url{rttyszka@cyf-kr.edu.pl}
\end{document}